\newtheorem{thm}{Theorem}[section]
\newtheorem{theorem}{Theorem}[section]
\newtheorem*{theorem A}{Theorem A}
\newtheorem*{theorem B}{N\"olker's Theorem}
\theoremstyle{remark}
\theoremstyle{remark}
\theoremstyle{definition}
\numberwithin{equation}{section}
\def\({\left ( }
    \def\){\right )}
\def\<{\left < }
\def\>{\right >}
\newcommand{\ie}{i.e. }
\newcommand{\s}{\mathfrak{S}}
\newcommand{\n}{\nabla}
\newcommand{\W}{\mathcal{W}}
\newcommand{\MM}{\mathcal{M}}
\newcommand{\ea}{\varepsilon_\alpha}
\newcommand{\R}{\mathbb R}
\newcommand{\ta}{\theta}
\newcommand{\gm}{\gamma}
\newcommand{\al}{\alpha}
\newcommand{\bt}{\beta}
\begin{document}

\vspace{2cm}

\title[Lie groups of dimension 4 and almost hypercomplex manifolds \dots]
{Lie groups of dimension 4 \\and almost hypercomplex manifolds \\with Hermitian-Norden metrics}

\author{Hristo Manev}
\address{Medical University -- Plovdiv, Faculty of Public Health,
Department of Medical Informatics, Biostatistics and e-Learning,   15-A Vasil Aprilov
Blvd.,   Plovdiv 4002,   Bulgaria;}
\email{hristo.manev@mu-plovdiv.bg}

\subjclass[2010]{Primary: 53C15, 53C50; Secondary: 22E60, 22E15, 53C55}



\keywords{Almost hypercomplex structure, Hermitian metric, Norden metric, Lie group, Lie algebra}

\begin{abstract}
Object of investigation are almost hypercomplex manifolds with Hermitian-Norden metrics of the lowest dimension. The considered manifolds are constructed on 4-dimensional Lie groups. It is established a relation between the classes of a classification of 4-dimensional indecomposable real Lie algebras and the classification of the manifolds under study. The basic geometrical characteristics of the constructed manifolds are studied in the frame of the mentioned classification of the Lie algebras.\thanks{The author was partially supported by Project MU21-FMI-008 of the Scientific Research Fund, University of Plovdiv, Bulgaria and National Scientific Program ''Young Researchers and Post-Doctorants'', Bulgaria}
\end{abstract}
\maketitle
\section{Introduction}
\label{intro}
A triad of anticommuting almost complex structures
such that each of them is a composition of the other two structures is called an almost hypercomplex structure $H$ on a $4n$-dimensional smooth manifold $\MM$. The structure $H$ could be equipped with a metric structure of Hermitian-Norden type, generated by a pseudo-Riemannian metric $g$ of neutral signature (\cite{GriMan24,GriManDim12}). In this case, in each tangent fibre, one of the almost complex structures of $H$ acts as an isometry and the other two act as anti-isometries with respect to $g$.
The metric $g$ is Hermitian with respect to one of almost complex structures of $H$ and $g$ is a Norden metric regarding the other two. Then, we have three associated (0,2)-tensors to the metric $g$ -- a K\"ahler form and two Norden metrics.

The manifold $\MM$, equipped with the considered structures, is called an almost hypercomplex manifold with Her\-mit\-ian-Norden metrics. The same manifolds are investigated in \cite{GriMan24,GriManDim12} under the name almost hypercomplex pseudo-Hermitian manifolds and in \cite{Man28,ManGri32} as almost hypercomplex manifolds with Hermitian and anti-Hermitian metrics.

Almost hypercomplex manifolds with Her\-mit\-ian-Norden metrics can be con\-struct\-ed on Lie groups.
In this work we use classification of four-dimensional indecomposable Lie algebras, known from \cite{GhaTho}.
The goal of this paper is to find a relation between the classes in this classification
and the corresponding manifolds to the classifications given in \cite{GrHe} and \cite{GaBo}, which are derived by the tensor structures and metrics of the respective manifolds.
Moreover, the present work gives the basic geometrical characteristics of the considered manifolds in each case.

The author's intention with this article is to complete the considered problem for all classes of the mentioned classification and thus to generalize the results from \cite{HM12} and \cite{HM13}.

Smooth manifolds with similar structures on Lie groups are studied in \cite{Barb,ManTav2,Ovando,ZamNak}.

\section{Almost hypercomplex manifolds with Hermit\-ian-Norden metrics}\label{sect-prel}

The subject of our study are \emph{almost hypercomplex
manifolds with Hermit\-ian-Norden metrics} (\cite{GriManDim12}). A differentiable manifold $\MM$ of this type has dimension $4n$ and it is denoted by $(\MM,H,G)$, where $(H,G)$ is an \emph{almost hy\-per\-com\-plex
structure with Hermit\-ian-Norden metrics}. More precisely, the almost hypercomplex structure $H=(J_1,J_2,J_3)$ has the following properties:
\begin{equation*}\label{J123} %
J_\al=J_\bt\circ J_\gm=-J_\gm\circ J_\bt, \qquad
J_\al^2=-I,
\end{equation*} %
for all cyclic permutations $(\alpha , \beta , \gamma )$ of $(1, 2, 3)$ and the identity $I$.
The quadruplet $G=(g,g_1,g_2,g_3)$ consists of a a neutral metric $g$, associated 2-form $g_1$ and associated neutral metrics $g_2$ and $g_3$ on $(\MM,H)$ having the properties %
\begin{equation}\label{gJJ} %
g(\cdot,\cdot)=\ea g(J_\al \cdot,J_\al \cdot),
\end{equation}
\begin{equation}\label{gJ} %
g_\al(\cdot,\cdot)=g(J_\al \cdot,\cdot)=-\ea g(\cdot,J_\al \cdot).
\end{equation} %
where
\begin{equation*}\label{epsiloni}
\ea=
\begin{cases}
\begin{array}{ll}
1, \quad & \al=1;\\
-1, \quad & \al=2;3.
\end{array}
\end{cases}
\end{equation*}

Here and further, $\alpha$ will run over the range
$\{1,2,3\}$ unless otherwise is stated.

Let us remark that the considered type of manifolds is the only possible way to involve Norden-type metrics on almost hypercomplex manifolds.

The following three tensors of type $(0,3)$ are the fundamental tensors of the almost hypercomplex
manifold with Hermit\-ian-Norden metrics (\cite{GriManDim12})
\begin{equation}\label{F'-al}
F_\al (x,y,z)=g\bigl( \left( \n_x J_\al
\right)y,z\bigr)=\bigl(\n_x g_\al\bigr) \left( y,z \right),
\end{equation}
where $\n$ is the Levi-Civita connection of $g$.
These tensors have the properties
\begin{equation}\label{FaJ-prop}
  F_{\al}(x,y,z)=-\ea F_{\al}(x,z,y)=-\ea F_{\al}(x,J_{\al}y,J_{\al}z)
\end{equation}
and they are related to each other as follows
\begin{equation*}\label{F1F2F3}
\begin{array}{l}
    F_1(x,y,z)=F_2(x,J_3y,z)+F_3(x,y,J_2z),\\[6pt]
    F_2(x,y,z)=F_3(x,J_1y,z)+F_1(x,y,J_3z),\\[6pt]
    F_3(x,y,z)=F_1(x,J_2y,z)-F_2(x,y,J_1z).
\end{array}
\end{equation*}

The corresponding 1-forms $\ta_\al$ of $F_\al$, known as Lee forms, are determined by
\begin{equation}\label{theta-al}
\ta_\al(\cdot)=g^{ij}F_\al(e_i,e_j,\cdot),
\end{equation}%
where $\{e_1,e_2,\dots, e_{4n}\}$ is an arbitrary basis of $T_p\MM$,
$p\in \MM$ and $g^{ij}$ are the corresponding components of the inverse matrix of $g$.

According to \eqref{gJJ}, $(\MM,J_1,g)$ is an almost Hermitian manifold whereas the manifolds $(\MM,J_2,g)$ and $(\MM,J_3,g)$ are almost complex manifolds with Norden metric. These two types of manifolds are classified in \cite{GrHe} and \cite{GaBo}, respectively.
In the case of the lowest dimension 4, the four basic classes of almost Hermitian manifolds with respect to
$J_1$ are restricted to two:
\begin{equation}\label{cl-H-dim4}
\begin{split}
&\W_2(J_1):\; \mathop{\s}_{x,y,z}\bigl\{F_1(x,y,z)\bigr\}=0; \\
&\W_4(J_1):\; F_1(x,y,z)=\dfrac{1}{2}
                \left\{g(x,y)\ta_1(z)-g(x,J_1y)\ta_1(J_1z)\right. \\
&\phantom{\W_4(J_1):\; F_1(x,y,z)=\quad\,}
                \left.-g(x,z)\ta_1(y)+g(x,J_1z)\ta_1(J_1y)
                \right\},
\end{split}
\end{equation}
where $\s $ is the cyclic sum by three arguments.
In the 4-dimensional case, the basic classes of almost Norden manifolds ($\al=2$ or $3$)
are determined as follows:
\begin{equation}\label{cl-N-dim4}
\begin{split}
&\W_1(J_\al):\; F_\al(x,y,z)=\dfrac{1}{4}\bigl\{
g(x,y)\ta_\al(z)+g(x,J_\al y)\ta_\al(J_\al z)\bigr.\\
&\phantom{\W_1(J_\al):\; F_\al(x,y,z)=\quad\,\,} %
\bigl.+g(x,z)\ta_\al(y)
    +g(x,J_\al z)\ta_\al(J_\al y)\bigr\};\\
&\W_2(J_\al):\; \mathop{\s}_{x,y,z}
\bigl\{F_\al(x,y,J_\al z)\bigr\}=0,\qquad \ta_\al=0;\\
&\W_3(J_\al):\; \mathop{\s}_{x,y,z} \bigl\{F_\al(x,y,z)\bigr\}=0.
\end{split}
\end{equation}

The curvature (1,3)-tensor of $\nabla$ is defined as usual by $R=\left[\n,\n\right]-\n_{[\ ,\ ]}$. The corresponding curvature (0,4)-tensor with respect to $g$ is denoted by the same letter, \ie
\begin{equation}\label{R}
R(x,y,z,w)=g(R(x,y)z,w),
\end{equation}
and it has the following well-known properties:
\begin{equation}\label{R-prop}
\begin{array}{c}
    R(x,y,z,w)=-R(y,x,z,w)=-R(x,y,w,z), \\[3pt]
R(x,y,z,w)+R(y,z,x,w)+R(z,x,y,w)=0.
\end{array}
\end{equation}

The Ricci tensor $\rho$ and the scalar curvature $\tau$ for $R$ as well as
their associated quantities $\rho^*$, $\tau_\al^*$ and $\tau_\al^{**}$ are defined by:
\begin{equation*}
\begin{array}{c}
    \rho(y,z)=g^{ij}R(e_i,y,z,e_j),\qquad \rho_\al^*(y,z)=g^{ij}R(e_i,y,z,J_\al e_j),\\[6pt]
    \tau=g^{ij}\rho(e_i,e_j), \qquad \tau_\al^*=g^{ij}\rho_\al^*(e_i,e_j),\qquad \tau_\al^{**}=g^{ij}\rho_\al^*(e_i,J_\al e_j).
\end{array}
\end{equation*}
The following properties for $\rho$ and $\rho_\al^*$ are valid:
\begin{equation}\label{rho-prop}
\begin{array}{c}
\rho_{jk}=\rho_{kj}$, \quad $(\rho_\al^*)_{jk}=-\ea (\rho_\al^*)_{kj},
\end{array}
\end{equation}
where $\rho_{jk}=\rho(e_j,e_k)$ and  $(\rho_\al^*)_{jk}=\rho_\al^*(e_j,e_k)$ are the basic components of $\rho$ and $\rho_\al^*$, respectively.

Let $\mu$ be a non-degenerate 2-plane with a basis $\{x,y\}$ in
$T_p\MM$, $p \in \MM$. The sectional curvature of $\mu$ with respect to $g$ and $R$ is defined by
\[
k(\mu;p)=\frac{R(x,y,y,x)}{g(x,x)g(y,y)-g(x,y)^2}.
\]
A 2-plane $\mu$ is called \emph{holomorphic} (resp., \emph{totally real}) if the condition $\mu= J_\al \mu$
(resp., $\mu \perp J_\al \mu \neq \mu$ with respect to $g$) holds. The sectional curvature of a holomorphic (resp., totally real) 2-plane is called \emph{holomorphic} (resp., \emph{totally real}) \emph{sectional curvature}.
The 2-plane $\mu$ and its sectional curvature $k(\mu;p)$ are called a \emph{basic 2-plane} and a \emph{basic sectional curvature}, respectively, if $\mu$ has a basis $\{e_i,e_j\}$ $(i,j\in\{1,2,\dots,4n\},i\neq j)$ for a basis $\{e_1,e_2,\ldots,e_{4n}\}$ of $T_p\MM$. In the latter case we denote $k_{ij}$.

\section{Four-dimensional indecomposable real Lie algebras}\label{sect-lie}
Different authors study real 4-dimensional indecomposable Lie algebras. Firstly, a classification is given in \cite{Muba}, which could be be found easily in \cite{Pat} and \cite{GhaTho}.
The object of investigation in \cite{Andrada} are four-dimensional solvable real Lie algebras. The authors of this work establish the one-to-one correspondence between their classification and the classifications in \cite{Muba} and \cite{Pat}. In all of the cited works, the basic classes are described by the non-zero Lie brackets with respect to a basis $\{e_1,e_2,e_3,e_4\}$.
In Table~\ref{tab:Correspondence}, it is shown the correspondence between the mentioned classifications.

{\renewcommand{\arraystretch}{1.2}
\begin{table}
\caption{Correspondence between some classifications of Lie algebras}
	\label{tab:Correspondence}
\begin{tabular}{|c|c|c|c|c|}
  \hline
\cite{GhaTho} & \,\cite{Andrada} & \cite{Muba} & \cite{Pat} \\ \hhline{|=|=|=|=|=|}
$\mathfrak{g}_{4,1}$ & $\mathfrak{n}_{4}$ & $\mathfrak{g}_{4,1}$ & $A_{4,1}$ \\\hline
$\mathfrak{g}_{4,2}$ & $\mathfrak{r}_{4,a}$ & $\mathfrak{g}_{4,2}$ & $A_{4,2}^{a}$ \\\hline
$\mathfrak{g}_{4,3}$ & $\mathfrak{r}_{4,0}$ & $\mathfrak{g}_{4,3}$ & $A_{4,3}$ \\\hline
$\mathfrak{g}_{4,4}$ & $\mathfrak{r}_{4}$ & $\mathfrak{g}_{4,4}$ & $A_{4,4}$ \\\hline
$\mathfrak{g}_{4,5}$ & $\mathfrak{r}_{4,a,b}$ & $\mathfrak{g}_{4,5}$ & $A_{4,5}^{a,b}$ \\\hline
$\mathfrak{g}_{4,6}$ & $\mathfrak{r'}_{4,a,b}$ & $\mathfrak{g}_{4,6}$ & $A_{4,6}^{a,b}$ \\\hline
$\mathfrak{g}_{4,7}$ & $\mathfrak{h}_{4}$ & $\mathfrak{g}_{4,7}$ & $A_{4,7}$ \\\hline
$\mathfrak{g}_{4,8}$ & $\mathfrak{d}_{4}$ & $\mathfrak{g}_{4,8 (-1)}$  & $A_{4,8}$ \\\hline
$\mathfrak{g}_{4,9}$ & $\mathfrak{d}_{4,1/1+b}$ & $\mathfrak{g}_{4,8}$ & $A_{4,9}^{b}$ \\\hline
$\mathfrak{g}_{4,10}$ & $\mathfrak{d'}_{4,0}$ & $\mathfrak{g}_{4,9 (0)}$  & $A_{4,10}$ \\\hline
$\mathfrak{g}_{4,11}$ & $\mathfrak{d'}_{4,a}$ & $\mathfrak{g}_{4,9}$ & $A_{4,11}^{a}$ \\\hline
$\mathfrak{g}_{4,12}$ & $\mathfrak{aff}(\mathbb{C})$ & $\mathfrak{g}_{4,10}$ & $A_{4,12}$ \\\hline
\end{tabular}
\end{table}}

In the present work, we use the notation of the classes from \cite{GhaTho}, namely
\begin{subequations}\label{4i}
\begin{equation}
\begin{array}{llll}
\mathfrak{g}_{4,1}:\quad  & [e_2,e_4]=e_1, \quad         & [e_3,e_4]=e_2;         &\\[3pt]
\mathfrak{g}_{4,2}:\quad  & [e_1,e_4]=m e_1, \quad       & [e_2,e_4]=e_2,          &\\[3pt]
                          & [e_3,e_4]=e_2+e_3, \quad     &                        &(m\neq0); \\[3pt]
\mathfrak{g}_{4,3}:\quad  & [e_1,e_4]=e_1, \quad         & [e_3,e_4]=e_2;         &\\[3pt]
\mathfrak{g}_{4,4}:\quad  & [e_1,e_4]=e_1, \quad         & [e_2,e_4]=e_1+e_2,     &\\[3pt]
                          & [e_3,e_4]=e_2+e_3; \quad     &                        &\\[3pt]
\mathfrak{g}_{4,5}:\quad  & [e_1,e_4]=e_1, \quad         & [e_2,e_4]=a_1 e_2,     &\\[3pt]
                          & [e_3,e_4]=a_2 e_3, \quad     &                        &(a_1\neq0, a_2\neq0); \\[3pt]
\mathfrak{g}_{4,6}:\quad  & [e_1,e_4]=b_1 e_1, \quad     & [e_2,e_4]=b_2 e_2-e_3, &\\[3pt]
                          & [e_3,e_4]=e_2+b_2 e_3, \quad &                        &(b_1\neq0, b_2\geq0); \\[3pt]
\mathfrak{g}_{4,7}:\quad  & [e_1,e_4]=2 e_1, \quad       & [e_2,e_3]=e_1,         &\\[3pt]
                          & [e_2,e_4]=e_2, \quad         & [e_3,e_4]=e_2+e_3;     &\\[3pt]
\mathfrak{g}_{4,8}:\quad  & [e_2,e_3]=e_1, \quad         & [e_2,e_4]=e_2,         &\\[3pt]
                          & [e_3,e_4]=-e_3; \quad        &                        &\\[3pt]
\end{array}
\end{equation}
\begin{equation}
\begin{array}{llll}
\mathfrak{g}_{4,9}:\quad  & [e_1,e_4]=(p+1) e_1, \quad   & [e_2,e_3]=e_1,         &\\[3pt]
                          & [e_2,e_4]=e_2, \quad         & [e_3,e_4]=p e_3,       &(-1 < p \leq 1); \\[3pt]
\mathfrak{g}_{4,10}:\quad & [e_2,e_3]=e_1, \quad         & [e_2,e_4]=-e_3,        &\\[3pt]
                          & [e_3,e_4]=e_2; \quad         &                        &\\[3pt]
\mathfrak{g}_{4,11}:\quad & [e_1,e_4]=2q e_1, \quad      & [e_2,e_3]=e_1,         &\\[3pt]
                          & [e_2,e_4]=q e_2 - e_3, \quad & [e_3,e_4]=e_2 + q e_3, &(q > 0);\\[3pt]
\mathfrak{g}_{4,12}:\quad & [e_1,e_3]=e_1, \quad         & [e_1,e_4]=-e_2,        &\\[3pt]
                          & [e_2,e_3]=e_2, \quad         & [e_2,e_4]=e_1,         &
\end{array}
\end{equation}
\end{subequations}
where $a_1$, $a_2$, $b_1$, $b_2$, $m$, $p$, $q$ $\in \R$.

\section{Lie groups as almost hypercomplex manifolds with Hermitian-Norden metrics}\label{sect-corr}
Let $\mathcal{L}$ be a simply connected 4-dimensional real Lie group with corresponding Lie algebra $\mathfrak{l}$.
A standard hypercomplex structure on $\mathfrak{l}$ for its basis $\{e_1,e_2,e_3,e_4\}$ is defined as in \cite{So}:
\begin{equation}\label{JJJ}
\begin{array}{llll}
J_1e_1=e_2, \quad & J_1e_2=-e_1,  \quad &J_1e_3=-e_4, \quad &J_1e_4=e_3;
\\[6pt]
J_2e_1=e_3, &J_2e_2=e_4, &J_2e_3=-e_1, &J_2e_4=-e_2;
\\[6pt]
J_3e_1=-e_4, &J_3e_2=e_3, &J_3e_3=-e_2, &J_3e_4=e_1.
\end{array}
\end{equation}

Let $g$ be a pseudo-Riemannian metric of neutral signature for $x(x^1,x^2,\allowbreak{}x^3,x^4)$, $y(y^1,y^2,y^3,y^4) \in \mathfrak{l}$ defined by:
\begin{equation*}\label{g}
g(x,y)=x^1y^1+x^2y^2-x^3y^3-x^4y^4.
\end{equation*}
Bearing in mind the latter equality, it is valid that
\begin{equation}\label{gij}
\begin{array}{c}
g(e_1,e_1)=g(e_2,e_2)=-g(e_3,e_3)=-g(e_4,e_4)=1,\\[6pt]
g(e_i,e_j)=0,\;\; i\neq j \in \{1,2,3,4\}.
\end{array}
\end{equation}
Let us note that further the indices $i$, $j$, $k$, $l$ run over the range $\{1,2,3,4\}$.
The metric $g$ generates an almost hypercomplex structure with Hermitian-Norden metrics on $\mathfrak{l}$, according to \eqref{gJJ} and \eqref{gJ}. Then, $(\mathcal{L},H,G)$ is an almost hypercomplex manifold with Hermitian-Norden metrics.

\begin{thm}\label{thm-corr}
Let $(\mathcal{L},H,G)$ be a 4-dimensional almost hypercomplex manifold with Hermitian-Norden metrics. Then, the manifold $(\mathcal{L},H,G)$, which is corresponding to the different classes of 4-dimensional Lie algebras $\mathfrak{g}_{4,i}$, $(i=1,\dots,12)$, belongs to a certain class regarding $J_\al$ given in Table~\ref{tab:thm}, where we denote for brevity $\W_{i}\oplus\W_{j}$ and $\W_{i}\oplus\W_{j}\oplus\W_{k}$ by $\W_{ij}$ and $\W_{ijk}$, respectively.
{\renewcommand{\arraystretch}{1.2}
\begin{table}
\caption{Correspondence between different classes Lie algebras and the classes almost hypercomplex manifold with Hermitian-Norden metrics}
	\label{tab:thm}
\begin{tabular}{|c|c|c|c|c|}
  \hline
  Lie algebra & Parameters & $J_1$ & $J_2$ & $J_3$ \\ \hline
  $\mathfrak{g}_{4,1}$ & -- & $\W_{24}$ & $\W_{123}$ & $\W_{123}$ \\ \hline
  \multirow{2}{*}{$\mathfrak{g}_{4,2}$} & $m=1$ & $\W_4$ & $\W_{123}$ & $\W_{123}$ \\\cline{2-5}
   & $m\neq 0$; $m\neq 1$ & $\W_{24}$ & $\W_{123}$ & $\W_{123}$ \\  \hline
  $\mathfrak{g}_{4,3}$ & -- & $\W_{24}$ & $\W_{123}$ & $\W_{123}$ \\ \hline
  $\mathfrak{g}_{4,4}$ & -- & $\W_{24}$ & $\W_{123}$ & $\W_{123}$ \\ \hline
  \multirow{12}{*}{$\mathfrak{g}_{4,5}$}
            & $a_1=-1$, $a_2=1$ & $\W_2$ & $\W_{2}$ & $\W_{123}$ \\\cline{2-5}
            & $a_1=-1$, $a_2=-1$ & $\W_2$ & $\W_{123}$ & $\W_{2}$ \\\cline{2-5}
            & $a_1=-1$, $a_2\neq\pm1$ & $\W_2$ & $\W_{123}$ & $\W_{123}$ \\\cline{2-5}
            & $a_1=1$, $a_2=1$ & $\W_4$ & $\W_{1}$ & $\W_{12}$ \\\cline{2-5}
            & $a_1=1$, $a_2=-3$ & $\W_4$ & $\W_{23}$ & $\W_{23}$ \\\cline{2-5}
            & $a_1=-1$, $a_2\neq\{-3,1\}$ & $\W_4$ & $\W_{123}$ & $\W_{123}$ \\\cline{2-5}
            & $a_1\neq\pm1$, $a_2=1$ & $\W_{24}$ & $\W_{12}$ & $\W_{123}$ \\\cline{2-5}
            & $a_1=-\frac{1}{3}$, $a_2=-\frac{1}{3}$ & $\W_{24}$ & $\W_{23}$ & $\W_{12}$ \\\cline{2-5}
            & $a_1=-\frac{1}{2}(a_2+1)$, $a_2\neq\{-3,-\frac{1}{3},1\}$ & $\W_{24}$ & $\W_{23}$ & $\W_{123}$ \\\cline{2-5}
            & $a_1=a_2$, $a_2\neq\{\pm1,-\frac{1}{3}\}$ & $\W_{24}$ & $\W_{123}$ & $\W_{12}$ \\\cline{2-5}
            & $a_1=-a_2-2$, $a_2\neq\{-3,-1\}$ & $\W_{24}$ & $\W_{123}$ & $\W_{23}$ \\\cline{2-5}
            & $a_1\neq0$, $a_2\neq0$ & $\W_{24}$ & $\W_{123}$ & $\W_{123}$ \\\hline
  $\mathfrak{g}_{4,6}$ & $b_1\neq0$, $b_2\geq0$ & $\W_{24}$ & $\W_{123}$ & $\W_{12}$ \\ \hline
  $\mathfrak{g}_{4,7}$ & -- & $\W_{4}$ & $\W_{123}$ & $\W_{123}$ \\ \hline
  $\mathfrak{g}_{4,8}$ & -- & $\W_{24}$ & $\W_{123}$ & $\W_{3}$ \\ \hline
  \multirow{2}{*}{$\mathfrak{g}_{4,9}$} & $p=1$ & $\W_4$ & $\W_{12}$ & $\W_{12}$ \\\cline{2-5}
   & $-1<p<1$ & $\W_{24}$ & $\W_{12}$ & $\W_{123}$ \\  \hline
  $\mathfrak{g}_{4,10}$ & -- & $\W_{24}$ & $\W_{123}$ & $\W_{123}$ \\ \hline
  $\mathfrak{g}_{4,11}$ & $q>0$ & $\W_{24}$ & $\W_{123}$ & $\W_{12}$ \\
  \hline
  $\mathfrak{g}_{4,12}$ & -- & $\W_{4}$ & $\W_{123}$ & $\W_{123}$ \\ \hline
\end{tabular}
\end{table}}

Moreover, we have:
\begin{itemize}
  \item for each $a_1\neq0$ and $a_2\neq0$, $(\mathcal{L},H,G)$ does not belong to neither of $\W_0$ for $J_1$; $\W_0$, $\W_3$, $\W_1\oplus\W_3$ for $J_2$; $\W_0$, $\W_1$, $\W_3$, $\W_1\oplus\W_3$ for $J_3$;
  \item for each $b_1\neq0$, $b_2\geq0$, $(\mathcal{L},H,G)$ does not belong to neither of $\W_0$, $\W_2$, $\W_4$ for $J_1$; $\W_0$, $\W_1$, $\W_2$, $\W_3$, $\W_1\oplus\W_2$, $\W_1\oplus\W_3$, $\W_2\oplus\W_3$ for $J_2$; $\W_0$, $\W_1$, $\W_2$ for $J_3$;
  \item for each $m\neq0$, $(\mathcal{L},H,G)$ does not belong to neither of $\W_0$, $\W_2$ for $J_1$; $\W_0$, $\W_1$, $\W_2$, $\W_3$, $\W_1\oplus\W_2$, $\W_1\oplus\W_3$, $\W_2\oplus\W_3$ for $J_2$; $\W_0$, $\W_1$, $\W_2$, $\W_3$, $\W_1\oplus\W_2$, $\W_1\oplus\W_3$, $\W_2\oplus\W_3$ for $J_3$;
  \item for each $-1<p\leq1$, $(\mathcal{L},H,G)$ does not belong to neither of $\W_0$, $\W_2$ for $J_1$; $\W_0$, $\W_1$, $\W_2$ for $J_2$; $\W_0$, $\W_1$, $\W_2$, $\W_3$, $\W_1\oplus\W_3$, $\W_2\oplus\W_3$ for $J_3$;
  \item for each $q>0$, $(\mathcal{L},H,G)$ does not belong to neither of $\W_0$, $\W_2$, $\W_4$ for $J_1$; $\W_0$, $\W_1$, $\W_2$, $\W_3$, $\W_1\oplus\W_2$, $\W_1\oplus\W_3$, $\W_2\oplus\W_3$ for $J_2$; $\W_0$, $\W_1$, $\W_2$ for $J_3$.
\end{itemize}
\end{thm}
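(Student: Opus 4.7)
The plan is to work through each of the twelve Lie algebras from \eqref{4i} in turn, converting the structure constants (and the fixed metric components \eqref{gij}) into the components of the fundamental tensors $F_\al$, and then reading off the class by substitution into the characterizing identities \eqref{cl-H-dim4} and \eqref{cl-N-dim4}. The bridge from Lie brackets to covariant derivative is the Koszul formula: since $g$ is left-invariant on $\mathcal{L}$, for left-invariant $e_i$ it collapses to
\begin{equation*}
2g(\n_{e_i}e_j,e_k)=g([e_i,e_j],e_k)-g([e_j,e_k],e_i)+g([e_k,e_i],e_j),
\end{equation*}
expressing every $\n_{e_i}e_j$ linearly in the structure constants of $\mathfrak{g}_{4,i}$. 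Next, for each $\al$ I would use
\begin{equation*}
(\n_{e_i}J_\al)e_j=\n_{e_i}(J_\al e_j)-J_\al(\n_{e_i}e_j)
\end{equation*}
together with \eqref{JJJ} to obtain all components $F_\al(e_i,e_j,e_k)=g((\n_{e_i}J_\al)e_j,e_k)$; the symmetries \eqref{FaJ-prop} drastically reduce the number of independent entries, after which the Lee forms $\ta_\al$ are read off from \eqref{theta-al} with $(g^{ij})=\mathrm{diag}(1,1,-1,-1)$.

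With $F_\al$ and $\ta_\al$ explicit, the class of $(\mathcal{L},H,G)$ with respect to $J_\al$ is identified as the smallest direct summand of $\W_2\oplus\W_4$ (for $J_1$) or $\W_1\oplus\W_2\oplus\W_3$ (for $J_2,J_3$) whose characterizing identities are met. For $J_1$ I test whether $\s_{x,y,z}F_1=0$ (the $\W_2$-criterion) and whether $F_1$ coincides with its $\W_4$-model built from $\ta_1$; the three Norden classes are separated analogously by the two cyclic-sum conditions and by matching the $\W_1$-formula involving $\ta_\al$. In the parameter families $\mathfrak{g}_{4,2}$, $\mathfrak{g}_{4,5}$, $\mathfrak{g}_{4,6}$, $\mathfrak{g}_{4,9}$ and $\mathfrak{g}_{4,11}$, the various rows in Table~\ref{tab:thm} arise from algebraic loci on the parameters that annihilate particular $\W_i$-components; for instance, in $\mathfrak{g}_{4,5}$ the condition $a_1=-1$ kills the $\W_4$-part for $J_1$, and the further splits on $a_2$ control whether the $\W_2$- or $\W_3$-parts are present for $J_2$ and $J_3$. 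Enumerating these vanishing loci produces the listed subclasses, while outside them the manifold sits in the full $\W_{24}$ or $\W_{123}$.

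The bulleted ``moreover'' assertions are negative statements recording which pure subclasses the manifold cannot belong to for any value of the parameters in a given family. Each such exclusion is proved by exhibiting, from the computed $F_\al$, at least one component which is nonzero for all admissible parameters and which is incompatible with the defining identities of the excluded subclass; concretely, this amounts to checking which monomials in the structure constants cannot simultaneously vanish under the parameter constraints. The main obstacle throughout is not conceptual but organizational: twelve algebras, three almost complex structures, and multi-line parameter splits (especially the twelve subcases of $\mathfrak{g}_{4,5}$) create a large bookkeeping problem, and care is needed to ensure that the parameter loci in Table~\ref{tab:thm} partition all admissible values without overlap and that the exclusions in the ``moreover'' clauses are sharp.
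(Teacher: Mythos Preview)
Your plan is correct and matches the paper's own proof: the author also reduces the Koszul formula to left-invariant vectors, computes the components of $\nabla$, $F_\al$ and $\theta_\al$ explicitly for each $\mathfrak{g}_{4,i}$, and then checks the identities \eqref{cl-H-dim4} and \eqref{cl-N-dim4} case by case (with the parameter-dependent families $\mathfrak{g}_{4,2}$, $\mathfrak{g}_{4,5}$, $\mathfrak{g}_{4,6}$, $\mathfrak{g}_{4,9}$, $\mathfrak{g}_{4,11}$ deferred to \cite{HM12,HM13}). The only difference is presentational: the paper writes out the full component lists for $\mathfrak{g}_{4,1}$ as a template and then records the analogous data for the remaining non-parametric algebras, whereas you describe the procedure abstractly.
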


\begin{proof}
Now, we give our arguments for the case when the corresponding Lie algebra of $(\mathcal{L},H,G)$ is from $\mathfrak{g}_{4,1}$. Then, using \eqref{gJJ}, \eqref{4i}, \eqref{JJJ} and the well-known Koszul equality
\[
2g\left(\n_{e_i}e_j,e_k\right)
=g\left([e_i,e_j],e_k\right)+g\left([e_k,e_i],e_j\right)
+g\left([e_k,e_j],e_i\right),
\]
we obtain the components of the Levi-Civita connection $\n$ for the considered basis.
The non-zero of them are:
\begin{equation}\label{nabla41}
\begin{array}{l}
\n_{e_1}e_2=\n_{e_2}e_1=\n_{e_2}e_3=\n_{e_3}e_2=\frac{1}{2}e_4,\\[3pt]
\n_{e_1}e_4=\n_{e_3}e_4=\n_{e_4}e_1=-\n_{e_4}e_3=\frac{1}{2}e_2,\\[3pt]
\n_{e_2}e_4=\frac{1}{2}(e_1-e_3), \qquad \n_{e_4}e_2=-\frac{1}{2}(e_1+e_3).
\end{array}
\end{equation}
Then, we obtain the basic components $(F_{\al})_{ijk}=F_{\al}(e_i,e_j,e_k)$ of $F_{\al}$ by virtue of  \eqref{F'-al}, \eqref{JJJ}, \eqref{gij} and \eqref{nabla41}.
The non-zero of them are determined by the following ones and properties \eqref{FaJ-prop}
\begin{equation}\label{Fijk-41}
\begin{array}{l}
(F_{1})_{141}=(F_{1})_{213}=(F_{1})_{341}=(F_{1})_{413}=(F_{2})_{212}=(F_{2})_{223}=(F_{2})_{414} \\[3pt]
\phantom{(F_{1})_{141}}=-(F_{2})_{412}=\frac{1}{2}(F_{2})_{122}=\frac{1}{2}(F_{2})_{322}=(F_{3})_{134}=-(F_{3})_{213}\\[3pt]
\phantom{(F_{1})_{141}}=(F_{3})_{334}=(F_{3})_{413}=-\frac{1}{2}(F_{3})_{211}=-\frac{1}{2}(F_{3})_{422}=\frac{1}{2}.
\end{array}
\end{equation}
Using \eqref{theta-al} and \eqref{Fijk-41}, we establish the basic components $(\theta_{\al})_i=(\theta_{\al})(e_i)$ of the corresponding Lee forms and the non-zero are
\begin{equation*}\label{ta-i-41}
(\ta_1)_2=(\ta_2)_3=-(\ta_3)_2=-(\ta_3)_4=1.
\end{equation*}

After that, bearing in mind the classification conditions \eqref{cl-H-dim4} and \eqref{cl-N-dim4} for dimension 4, we conclude that in this case the manifold $(\mathcal{L},H,G)$ belongs to
\[
(\W_2\oplus\W_4)(J_1) \cap (\W_1\oplus\W_2\oplus\W_3)(J_2) \cap (\W_1\oplus\W_2\oplus\W_3)(J_3).
\]

The proofs for the cases of the classes $\mathfrak{g}_{4,2}$, $\mathfrak{g}_{4,5}$, $\mathfrak{g}_{4,6}$, $\mathfrak{g}_{4,9}$ and $\mathfrak{g}_{4,11}$ are given in \cite{HM12} and \cite{HM13}.

In a similar way we prove the assertions for the other classes using the following results for each case:
\begin{subequations}\label{res-F-4i}
\begin{equation*}
\begin{array}{l}
\mathfrak{g}_{4,3}:\\[3pt]
\n_{e_1}e_1=2\n_{e_2}e_3=2\n_{e_3}e_2=e_4, \quad \n_{e_1}e_4=e_1,\quad
\n_{e_2}e_4=\n_{e_4}e_2=-\frac{1}{2}e_3, \\[3pt]
\n_{e_3}e_4=-\n_{e_4}e_3=\frac{1}{2}e_2; \quad
(F_{1})_{113}=-2(F_{1})_{314}=2(F_{1})_{413}=1, \\[3pt]
(F_{2})_{112}=(F_{2})_{322}=2(F_{2})_{223}=-2(F_{2})_{412}=1, \\[3pt]
\frac{1}{2}(F_{3})_{111}=2(F_{3})_{213}=2(F_{3})_{312}=(F_{3})_{422}=-1; \\[3pt]
(\ta_1)_2=(\ta_1)_3=(\ta_2)_2=(\ta_2)_3=-\frac{1}{2}(\ta_3)_1=-(\ta_3)_4=1;\\[6pt]
\mathfrak{g}_{4,4}:\\[3pt]
\n_{e_1}e_1=2\n_{e_1}e_2=2\n_{e_2}e_1=\n_{e_2}e_2=2\n_{e_2}e_3=2\n_{e_3}e_2=-\n_{e_3}e_3=e_4,\\[3pt]
\n_{e_1}e_4=e_1+\frac{1}{2}e_2, \quad \n_{e_2}e_4=\frac{1}{2}e_1+e_2-\frac{1}{2}e_3,  \quad \n_{e_3}e_4=\frac{1}{2}e_2+e_3,\\[3pt]
\n_{e_4}e_1=-\n_{e_4}e_3=\frac{1}{2}e_2, \quad \n_{e_4}e_2=-\frac{1}{2}e_1-\frac{1}{2}e_3; \\[3pt]
\frac{1}{2}(F_{1})_{113}=-(F_{1})_{114}=(F_{1})_{213}=-\frac{1}{2}(F_{1})_{214}=-(F_{1})_{314}=(F_{1})_{413}=\frac{1}{2}, \\[3pt]
(F_{2})_{112}=(F_{2})_{122}=2(F_{2})_{212}=-2(F_{2})_{214}=\frac{1}{2}(F_{2})_{222}\\[3pt]
\phantom{\frac{1}{2}(F_{2})_{113}}                       =(F_{2})_{314}=(F_{2})_{322}=-2(F_{2})_{412}=2(F_{2})_{414}=1, \\[3pt]
\frac{1}{2}(F_{3})_{111}=2(F_{3})_{112}=(F_{3})_{211}=(F_{3})_{212}=2(F_{3})_{213}\\[3pt]
\phantom{\frac{1}{2}(F_{3})_{113}}                       =2(F_{3})_{312}=-(F_{3})_{313}=-2(F_{3})_{413}=(F_{3})_{422}=-1; \\[3pt]
2(\ta_1)_2=(\ta_1)_3=\frac{1}{2}(\ta_2)_2=2(\ta_2)_3=-\frac{1}{2}(\ta_3)_1=-2(\ta_3)_2=-2(\ta_3)_4=2;\\[6pt]
\mathfrak{g}_{4,7}:\\[3pt]
\frac{1}{2}\n_{e_1}e_1=\n_{e_2}e_2=-\n_{e_3}e_3=e_4, \quad \n_{e_1}e_2=\n_{e_2}e_1=-\n_{e_4}e_2=\frac{1}{2}e_3, \\[3pt]
\end{array}
\end{equation*}
\begin{equation*}
\begin{array}{llll}
\n_{e_1}e_3=\n_{e_3}e_1=-\n_{e_4}e_3=\frac{1}{2}e_2, \quad \n_{e_1}e_4=2e_1, \quad \n_{e_2}e_3=\frac{1}{2}e_1+\frac{1}{2}e_4, \\[3pt]
\n_{e_2}e_4=e_2-\frac{1}{2}e_3, \quad \n_{e_3}e_2=-\frac{1}{2}e_1+\frac{1}{2}e_4, \quad \n_{e_3}e_4=\frac{1}{2}e_2+e_3; \\[3pt]
(F_{1})_{113}=-(F_{1})_{214}=-3(F_{1})_{314}=3(F_{1})_{413}=\frac{3}{2},\\[3pt]
\frac{2}{5}(F_{2})_{112}=(F_{2})_{211}=-2(F_{2})_{214}=\frac{1}{2}(F_{2})_{222}\\[3pt]
\phantom{\frac{1}{2}(F_{2})_{113}}                       =\frac{2}{3}(F_{2})_{314}=(F_{2})_{322}=-2(F_{2})_{412}=1, \\[3pt]
\frac{1}{4}(F_{3})_{111}=-(F_{3})_{122}=2(F_{3})_{212}=2(F_{3})_{213}\\[3pt]
\phantom{\frac{1}{2}(F_{3})_{113}}                       =2(F_{3})_{312}=-\frac{2}{3}(F_{3})_{313}=(F_{3})_{422}=-1; \\[3pt]
(\ta_1)_2=\frac{1}{3}(\ta_1)_3=\frac{1}{6}(\ta_2)_2=(\ta_2)_3
                          =-\frac{1}{4}(\ta_3)_1=-(\ta_3)_4=1;\\[6pt]
\mathfrak{g}_{4,8}:\\[3pt]
\n_{e_1}e_2=\n_{e_2}e_1=-\frac{1}{2}\n_{e_3}e_4=\frac{1}{2}e_3, \quad
\n_{e_1}e_3=\frac{1}{2}\n_{e_2}e_4=\n_{e_3}e_1=\frac{1}{2}e_2, \\[3pt]
\n_{e_2}e_2=\n_{e_3}e_3=e_4, \quad \n_{e_2}e_3=-\n_{e_3}e_2=\frac{1}{2}e_1; \\[3pt]
(F_{1})_{113}=\frac{1}{3}(F_{1})_{214}=-\frac{1}{2}, \quad
(F_{3})_{122}=-2(F_{3})_{212}=-2(F_{3})_{313}=1, \\[3pt]
2(F_{2})_{112}=(F_{2})_{211}=\frac{1}{2}(F_{2})_{222}=-2(F_{2})_{314}=1; \quad
(\ta_1)_3=\frac{1}{2}(\ta_2)_2=1;\\[6pt]
\mathfrak{g}_{4,10}:\\[3pt]
\n_{e_1}e_2=\n_{e_2}e_1=-\frac{1}{2}\n_{e_2}e_4=\frac{1}{2}e_3, \quad
\n_{e_1}e_3=\n_{e_3}e_1=\frac{1}{2}\n_{e_3}e_4=\frac{1}{2}e_2, \\[3pt]
\n_{e_2}e_3=\frac{1}{2}e_1+e_4, \quad \n_{e_3}e_2=-\frac{1}{2}e_1+e_4; \\[3pt]
(F_{1})_{113}=(F_{1})_{214}=\frac{1}{2}(F_{1})_{314}=-\frac{1}{2},\\[3pt]
(F_{2})_{112}=\frac{1}{2}(F_{2})_{211}=-\frac{1}{2}(F_{2})_{214}=(F_{2})_{314}=\frac{1}{4}(F_{2})_{322}=\frac{1}{2}, \\[3pt]
(F_{3})_{122}=2(F_{3})_{212}=-(F_{3})_{213}=-(F_{3})_{312}=2(F_{3})_{313}=1; \\[3pt]
(\ta_1)_2=(\ta_2)_2=(\ta_2)_3=-\frac{1}{2}(\ta_3)_4=1;\\[6pt]
\mathfrak{g}_{4,12}:\\[3pt]
\n_{e_1}e_1=\n_{e_2}e_2=e_3, \quad \n_{e_1}e_3=-\n_{e_4}e_2=e_1, \quad \n_{e_2}e_3=\n_{e_4}e_1=e_2, \\[3pt]
(F_{1})_{114}=\frac{3}{2}(F_{1})_{213}=-\frac{3}{2}, \quad \frac{1}{2}(F_{2})_{111}=(F_{2})_{212}=(F_{2})_{414}=1, \\[3pt]
(F_{3})_{112}=\frac{1}{2}(F_{3})_{222}=(F_{3})_{413}=1, \quad
(\ta_1)_4=-(\ta_2)_1=-(\ta_3)_2=-2.
\end{array}
\end{equation*}
\end{subequations}

\end{proof}

\section{Curvature properties of the manifolds under study}\label{sect-prop}
In this section we determine some geometric characteristics of the manifolds $(\mathcal{L},H,G)$ in all the classes considered in the previous section.
The focus of the con\-si\-de\-rations in \cite{HM12} and \cite{HM13} are the classes of the classification of 4-dimensional indecomposable real Lie algebras, given in \eqref{4i}, depending on real parameters.
Actually, these five classes are families of manifolds whose properties are functions of the parameters.
The curvature properties of the considered manifolds are summarized in the following

\begin{theorem}\label{thm-char}
Let $(\mathcal{L},H,G)$ be a 4-dimensional almost hypercomplex manifold with Hermitian-Norden metrics, as well as let the corresponding Lie algebra $\mathfrak{l}$ of $L$ be from the class $\mathfrak{g}_{4,i}$, $(i=1,\dots,12)$, given in \eqref{4i}. Then the
following propositions are valid:
\begin{enumerate}
    \item Every $(\mathcal{L},H,G)$ is non-flat;
    \item An $(\mathcal{L},H,G)$ is scalar flat if and only if $\mathfrak{l}$ belongs to:
            \begin{enumerate}
            \item $\mathfrak{g}_{4,1}$,
            \item $\mathfrak{g}_{4,6} \; \bigl(b_1=-b_2\pm\sqrt{1-2b_2^2}, \; 0\leq b_2 \leq \frac{\sqrt{2}}{2}, \; b_2 \neq \frac{\sqrt{3}}{3}\bigr)$,
            \item $\mathfrak{g}_{4,11} \; \bigl(q=\frac{\sqrt{3}}{6}\bigr)$;
            \end{enumerate}
    \item An $(\mathcal{L},H,G)$ has positive scalar curvature if and only if $\mathfrak{l}$ belongs to:
            \begin{enumerate}
            \item $\mathfrak{g}_{4,i} \; \bigl(i=2,3,4,5,7,8,9,11,12\bigr)$,
            \item $\mathfrak{g}_{4,6} \; \bigl(b_1\neq0, \; b_2 > \frac{\sqrt{2}}{2}\bigr)$,
            \item $\mathfrak{g}_{4,11} \; \bigl(q>\frac{\sqrt{3}}{6}\bigr)$;
            \end{enumerate}
    \item An $(\mathcal{L},H,G)$ has negative scalar curvature if and only if $\mathfrak{l}$ belongs to:
            \begin{enumerate}
            \item $\mathfrak{g}_{4,10}$,
            \item $\mathfrak{g}_{4,11} \; \bigl(0<q<\frac{\sqrt{3}}{6}\bigr)$;
            \end{enumerate}
    \item Every $(\mathcal{L},H,G)$ is $*$-scalar flat w.r.t. $J_1$ and $J_2$;
    \item An $(\mathcal{L},H,G)$ is $*$-scalar flat w.r.t. $J_3$ if and only if $\mathfrak{l}$ belongs to:
            \begin{enumerate}
            \item $\mathfrak{g}_{4,i} \; \bigl(i=1,5,8,9,10,12\bigr)$,
            \item $\mathfrak{g}_{4,2} \; \bigl(m=-2\bigr)$,
            \item $\mathfrak{g}_{4,6} \; \bigl(b_1=-2b_2\bigr)$;
            \end{enumerate}
    \item An $(\mathcal{L},H,G)$ is $**$-scalar flat w.r.t. $J_1$ if and only if $\mathfrak{l}$ belongs to:
            \begin{enumerate}
            \item $\mathfrak{g}_{4,2} \; \bigl(m=-\frac{1}{4}\bigr)$,
            \item $\mathfrak{g}_{4,5} \; \bigl(a_1=-a_2^2\bigr)$,
            \item $\mathfrak{g}_{4,6} \; \bigl(b_1=b_2^{-1}-b_2\bigr)$,
            \item $\mathfrak{g}_{4,11} \; \bigl(q=\frac{\sqrt{15}}{6}\bigr)$;
            \end{enumerate}
    \item An $(\mathcal{L},H,G)$ is $**$-scalar flat w.r.t. $J_2$ if and only if $\mathfrak{l}$ belongs to:
            \begin{enumerate}
            \item $\mathfrak{g}_{4,2} \; \bigl(m=-\frac{5}{4}\bigr)$,
            \item $\mathfrak{g}_{4,5} \; \bigl(a_2=-a_1^2\bigr)$,
            \item $\mathfrak{g}_{4,6} \; \bigl(b_1=b_2^{-1}-b_2\bigr)$,
            \item $\mathfrak{g}_{4,11} \; \bigl(q=\frac{\sqrt{15}}{6}\bigr)$;
            \end{enumerate}
    \item An $(\mathcal{L},H,G)$ is $**$-scalar flat w.r.t. $J_3$ if and only if $\mathfrak{l}$ belongs to:
            \begin{enumerate}
            \item $\mathfrak{g}_{4,1}$,
            \item $\mathfrak{g}_{4,9} \; \bigl(p=\frac{\sqrt{2}-3}{2}\bigr)$;
            \end{enumerate}
    \item An $(\mathcal{L},H,G)$ has positive basic holomorphic sectional curvatures w.r.t. $J_1$ (\ie $k_{12}$ and $k_{34}$) if and only if $\mathfrak{l}$ belongs to:
            \begin{enumerate}
            \item $\mathfrak{g}_{4,i} \; \bigl(i=4,7\bigr)$,
            \item $\mathfrak{g}_{4,2} \; \bigl(m>0\bigr)$,
            \item $\mathfrak{g}_{4,5} \; \bigl(a_1>0\bigr)$,
            \item $\mathfrak{g}_{4,6} \; \bigl(b_1>0,\; b_2>1\bigr)$,
            \item $\mathfrak{g}_{4,9} \; \bigl(-\frac{3}{4}<p\leq1,\; p\neq0\bigr)$,
            \item $\mathfrak{g}_{4,11} \; \bigl(q>1\bigr)$;
            \end{enumerate}
    \item An $(\mathcal{L},H,G)$ has positive basic holomorphic sectional curvatures w.r.t. $J_2$ (\ie $k_{13}$ and $k_{24}$) if and only if $\mathfrak{l}$ belongs to:
            \begin{enumerate}
            \item $\mathfrak{g}_{4,i} \; \bigl(i=4,7\bigr)$,
            \item $\mathfrak{g}_{4,2} \; \bigl(m>0\bigr)$,
            \item $\mathfrak{g}_{4,5} \; \bigl(a_2>0\bigr)$,
            \item $\mathfrak{g}_{4,6} \; \bigl(b_1>0,\; b_2>1\bigr)$,
            \item $\mathfrak{g}_{4,9} \; \bigl(\frac{\sqrt{2}-1}{2}<p\leq1\bigr)$,
            \item $\mathfrak{g}_{4,11} \; \bigl(q>1\bigr)$;
            \end{enumerate}
    \item An $(\mathcal{L},H,G)$ has positive basic holomorphic sectional curvatures w.r.t. $J_3$ (\ie $k_{14}$ and $k_{23}$) if and only if $\mathfrak{l}$ belongs to:
            \begin{enumerate}
            \item $\mathfrak{g}_{4,i} \; \bigl(i=2,3,4,6,7,11\bigr)$,
            \item $\mathfrak{g}_{4,5} \; \bigl(a_1a_2>0\bigr)$,
            \item $\mathfrak{g}_{4,9} \; \bigl(-\frac{3}{4}<p\leq1,\; p\neq0\bigr)$;
            \end{enumerate}
    \item An $(\mathcal{L},H,G)$ has negative basic holomorphic sectional curvatures w.r.t. $J_1$ (\ie $k_{12}$ and $k_{34}$) if and only if $\mathfrak{l}$ belongs to:
            \begin{enumerate}
            \item $\mathfrak{g}_{4,1}$,
            \item $\mathfrak{g}_{4,6} \; \bigl(b_1<0,\; 0<b_2<1\bigr)$,
            \item $\mathfrak{g}_{4,11} \; \bigl(0<q<\frac{\sqrt{2}}{4}\bigr)$;
            \end{enumerate}
    \item An $(\mathcal{L},H,G)$ has negative basic holomorphic sectional curvatures w.r.t. $J_2$ (\ie $k_{13}$ and $k_{24}$) if and only if $\mathfrak{l}$ belongs to:
            \begin{enumerate}
            \item $\mathfrak{g}_{4,6} \; \bigl(b_1<0,\; 0<b_2<1\bigr)$,
            \item $\mathfrak{g}_{4,11} \; \bigl(0<q<\frac{\sqrt{2}}{4}\bigr)$;
            \end{enumerate}
    \item Every $(\mathcal{L},H,G)$ has non-negative basic holomorphic sectional curvatures w.r.t. $J_3$ (\ie $k_{14}$ and $k_{23}$);
    \item An $(\mathcal{L},H,G)$ has positive basic totally real sectional curvatures w.r.t. $J_1$ (\ie $k_{13}$, $k_{14}$, $k_{23}$ and $k_{24}$) if and only if $\mathfrak{l}$ belongs to:
            \begin{enumerate}
            \item $\mathfrak{g}_{4,i} \; \bigl(i=4,7\bigr)$,
            \item $\mathfrak{g}_{4,2} \; \bigl(m>0\bigr)$,
            \item $\mathfrak{g}_{4,5} \; \bigl(a_1>0,\; a_2>0\bigr)$,
            \item $\mathfrak{g}_{4,6} \; \bigl(b_1>0,\; b_2>1\bigr)$,
            \item $\mathfrak{g}_{4,9} \; \bigl(\frac{\sqrt{2}-1}{2}<p\leq1\bigr)$,
            \item $\mathfrak{g}_{4,11} \; \bigl(q>1\bigr)$;
            \end{enumerate}
    \item An $(\mathcal{L},H,G)$ has positive basic totally real sectional curvatures w.r.t. $J_2$ (\ie $k_{12}$, $k_{14}$, $k_{23}$ and $k_{34}$) if and only if $\mathfrak{l}$ belongs to:
            \begin{enumerate}
            \item $\mathfrak{g}_{4,i} \; \bigl(i=4,7\bigr)$,
            \item $\mathfrak{g}_{4,2} \; \bigl(m>0\bigr)$,
            \item $\mathfrak{g}_{4,5} \; \bigl(a_1>0,\; a_2>0\bigr)$,
            \item $\mathfrak{g}_{4,6} \; \bigl(b_1>0,\; b_2>1\bigr)$,
            \item $\mathfrak{g}_{4,9} \; \bigl(-\frac{3}{4}<p\leq1,\; p\neq0\bigr)$,
            \item $\mathfrak{g}_{4,11} \; \bigl(q>1\bigr)$;
            \end{enumerate}
    \item An $(\mathcal{L},H,G)$ has positive basic totally real sectional curvatures w.r.t. $J_3$ (\ie $k_{12}$, $k_{13}$, $k_{24}$ and $k_{34}$) if and only if $\mathfrak{l}$ belongs to:
            \begin{enumerate}
            \item $\mathfrak{g}_{4,i} \; \bigl(i=4,7\bigr)$,
            \item $\mathfrak{g}_{4,2} \; \bigl(m>0\bigr)$,
            \item $\mathfrak{g}_{4,5} \; \bigl(a_1>0,\; a_2>0\bigr)$,
            \item $\mathfrak{g}_{4,6} \; \bigl(b_1>0,\; b_2>1\bigr)$,
            \item $\mathfrak{g}_{4,9} \; \bigl(\frac{\sqrt{2}-1}{2}<p\leq1\bigr)$,
            \item $\mathfrak{g}_{4,11} \; \bigl(q>1\bigr)$;
            \end{enumerate}
    \item Every $(\mathcal{L},H,G)$ has non-negative basic totally real sectional curvatures w.r.t. $J_1$ (\ie $k_{13}$, $k_{14}$, $k_{23}$ and $k_{24}$) and $J_2$ (\ie $k_{12}$, $k_{14}$, $k_{23}$ and $k_{34}$);
    \item An $(\mathcal{L},H,G)$ has negative basic totally real sectional curvatures w.r.t. $J_3$ (\ie $k_{12}$, $k_{13}$, $k_{24}$ and $k_{34}$) if and only if $\mathfrak{l}$ belongs to:
            \begin{enumerate}
            \item $\mathfrak{g}_{4,6} \; \bigl(b_1<0,\; 0<b_2<1\bigr)$,
            \item $\mathfrak{g}_{4,11} \; \bigl(0<q<\frac{\sqrt{2}}{4}\bigr)$.
            \end{enumerate}
    \end{enumerate}
\end{theorem}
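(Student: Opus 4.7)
The proof strategy is computational and proceeds case-by-case over the twelve Lie-algebra classes $\mathfrak{g}_{4,i}$ listed in \eqref{4i}. For each class, the proof of Theorem \ref{thm-corr} has already produced the non-zero components of the Levi-Civita connection $\n$ in the basis $\{e_1,e_2,e_3,e_4\}$. From these, the definition $R=[\n,\n]-\n_{[\cdot,\cdot]}$ together with the Lie brackets \eqref{4i} and \eqref{R} yields the basic components $R_{ijkl}=R(e_i,e_j,e_k,e_l)$; the symmetries \eqref{R-prop} cut the number of independent components by roughly a factor of four. Once the table of $R_{ijkl}$ is in hand, every quantity in the statement of Theorem \ref{thm-char} is obtained by direct contractions: the Ricci tensor, the scalar curvature $\tau$, and the $*$- and $**$-scalars $\tau^*_\al,\tau^{**}_\al$ via $g^{ij}$ from \eqref{gij} and $J_\al$ from \eqref{JJJ}, and each basic sectional curvature via $k_{ij}=R_{ijji}/\bigl(g(e_i,e_i)g(e_j,e_j)\bigr)$, since $g(e_i,e_j)=0$ for $i\neq j$.

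For the parametric families $\mathfrak{g}_{4,2}$, $\mathfrak{g}_{4,5}$, $\mathfrak{g}_{4,6}$, $\mathfrak{g}_{4,9}$ and $\mathfrak{g}_{4,11}$ the curvature computations are already available in \cite{HM12} and \cite{HM13}, so I would simply read off the claimed conditions from there. For the remaining parameter-free classes $\mathfrak{g}_{4,1}$, $\mathfrak{g}_{4,3}$, $\mathfrak{g}_{4,4}$, $\mathfrak{g}_{4,7}$, $\mathfrak{g}_{4,8}$, $\mathfrak{g}_{4,10}$ and $\mathfrak{g}_{4,12}$ the verification runs as follows. Item (1) is settled by exhibiting at least one non-vanishing $R_{ijkl}$. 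Items (2)--(4) follow from the value and sign of $\tau$; items (5)--(9) from the vanishing loci of $\tau^*_\al$ and $\tau^{**}_\al$. For items (10)--(20) one computes the six basic sectional curvatures $k_{ij}$ and, using \eqref{JJJ}, sorts them into holomorphic pairs w.r.t.\ each $J_\al$ --- namely $\{k_{12},k_{34}\}$ for $J_1$, $\{k_{13},k_{24}\}$ for $J_2$, $\{k_{14},k_{23}\}$ for $J_3$ --- and totally real quadruples, then inspects their signs.

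For the five parametric classes the conditions of the theorem translate into explicit algebraic equations or inequalities in the parameters. For example, scalar flatness on $\mathfrak{g}_{4,6}$ amounts to $\tau(b_1,b_2)=0$, which, under the standing constraints $b_1\neq0$ and $b_2\geq0$, solves to the branch $b_1=-b_2\pm\sqrt{1-2b_2^2}$ of item (2b); $**$-scalar flatness on $\mathfrak{g}_{4,5}$ gives $\tau^{**}_1=0$ with real locus $a_1=-a_2^2$ of item (7b); and each sign condition on a sectional curvature reduces to a polynomial inequality in the parameters whose solution set must be checked to match the claimed ranges of $m$, $a_1,a_2$, $b_1,b_2$, $p$ or $q$.

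The principal obstacle is organizational rather than conceptual: keeping the bookkeeping straight across twelve classes, five of which carry one or two continuous parameters, while consistently identifying holomorphic versus totally real 2-planes from \eqref{JJJ} and handling the indefinite signature of $g$ without sign errors. The universal non-negativity assertions in items (15) and (19) provide a useful internal consistency check, since the corresponding $R_{ijji}$ should present themselves as (signed) sums of squares in every class; any case where the wrong sign emerges would signal an arithmetic slip to be corrected before assembling the tables that encode the final classification.
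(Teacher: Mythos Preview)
Your proposal is correct and follows essentially the same approach as the paper: a case-by-case computation of the curvature tensor from the Levi-Civita connection already obtained in the proof of Theorem~\ref{thm-corr}, followed by the contractions yielding $\rho$, $\tau$, $\tau_\al^*$, $\tau_\al^{**}$ and the six $k_{ij}$, with the parametric families $\mathfrak{g}_{4,2}$, $\mathfrak{g}_{4,5}$, $\mathfrak{g}_{4,6}$, $\mathfrak{g}_{4,9}$, $\mathfrak{g}_{4,11}$ handled by citing \cite{HM12} and \cite{HM13}. The paper carries out exactly this plan, displaying the full tables of $R_{ijkl}$, $\rho_{jk}$, $(\rho_\al^*)_{jk}$, $\tau$, $\tau_\al^*$, $\tau_\al^{**}$ and $k_{ij}$ for each of the seven parameter-free classes and then reading off the assertions.
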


\begin{proof}
Firstly, we present our proof for the case when the corresponding Lie algebra of $\mathcal{L}$ belongs to $\mathfrak{g}_{4,1}$.

Using \eqref{gJJ}, \eqref{R}, \eqref{JJJ} and the definition of $\mathfrak{g}_{4,1}$ in \eqref{4i}, we calculate the basic components $R_{ijkl}=R(e_i,e_j,e_k,e_l)$ of
$R$. The non-zero of them are determined by the following ones and properties \eqref{R-prop}
\begin{equation}\label{R-41}
\begin{array}{c}
R_{1212}=-R_{1223}=-R_{1414}=R_{1434}=R_{2323}=\frac{1}{4}R_{2424}=\frac{1}{3}R_{3434}=\frac{1}{4}.
\end{array}
\end{equation}

Bearing in mind the latter equalities, \eqref{gJJ}, \eqref{JJJ} and \eqref{gij}, we obtain the basic components $\rho_{jk}=\rho(e_j,e_k)$,  $(\rho_\al^*)_{jk}=\rho_\al^*(e_j,e_k)$, as well as the values of $\tau$, $\tau_\al^*$, $\tau_\al^{**}$ and $k_{ij}=k(e_i,e_j)$. Having in mind properties \eqref{rho-prop}, the non-zero of them are determined by
\begin{equation}\label{rho-tau-41}
\begin{array}{c}
\rho_{11}=-\frac{1}{2}\rho_{22}=-\rho_{33}=-\frac{1}{2}, \;\;
(\rho_1^*)_{12}=(\rho_1^*)_{14}=-(\rho_1^*)_{23}=\frac{1}{3}(\rho_1^*)_{34}=-\frac{1}{4}, \\[3pt]
(\rho_2^*)_{22}=-\frac{1}{2}(\rho_2^*)_{24}=(\rho_2^*)_{44}=-\frac{1}{2},\;\;
(\rho_3^*)_{12}=(\rho_3^*)_{14}=(\rho_3^*)_{23}=\frac{1}{4}, \\[3pt]
\tau_1^{**}=-\tau_2^{**}=-2, \;\; k_{12}=k_{14}=-k_{23}=-\frac{1}{4}k_{24}=\frac{1}{3}k_{34}=-\frac{1}{4}.
\end{array}
\end{equation}

By virtue of \eqref{R-41} and \eqref{rho-tau-41}, we establish the truthfullness of the statements for the case of $\mathfrak{g}_{4,1}$.

The results for the cases of the classes $\mathfrak{g}_{4,2}$, $\mathfrak{g}_{4,5}$, $\mathfrak{g}_{4,6}$, $\mathfrak{g}_{4,9}$ and $\mathfrak{g}_{4,11}$, which are are summarized here, are given in \cite{HM12} and \cite{HM13}.

In a similar way as for $\mathfrak{g}_{4,1}$, we obtain the following results for $(\mathcal{L},H,G)$ in the other cases and we prove the respective assertions:

\begin{subequations}
\begin{equation*}\label{res-R-4i}
\begin{array}{l}
\mathfrak{g}_{4,3}:\\[3pt]
-\frac{1}{2}R_{1213}=\frac{1}{4}R_{1414}=R_{2323}=R_{2424}=\frac{1}{3}R_{3434}=\frac{1}{4};\\[3pt]
\frac{1}{2}\rho_{11}=\rho_{22}=\rho_{23}=\rho_{33}=-\rho_{44}=\frac{1}{2}, \\[3pt]
-\frac{1}{3}(\rho_1^*)_{34}=-\frac{1}{2}(\rho_2^*)_{12}=(\rho_2^*)_{24}=\frac{1}{4}(\rho_3^*)_{11}
=-\frac{1}{4}(\rho_3^*)_{14}=(\rho_3^*)_{23}=\frac{1}{4}, \\[3pt]
\tau=\frac{3}{2}\tau_3^*=-\tau_1^{**}=3\tau_2^{**}=5\tau_3^{**}=\frac{3}{2}, \;\; \frac{1}{4}k_{14}=k_{23}=k_{24}=-\frac{1}{3}k_{34}=\frac{1}{4};\\[6pt]
\mathfrak{g}_{4,4}:\\[3pt]
-\frac{4}{3}R_{1212}=-2R_{1213}=-4R_{1223}=R_{1313}=2R_{1323}=\frac{4}{3}R_{1414}\\[3pt]
\phantom{-\frac{4}{3}R_{1212}}=R_{1424}=4R_{1434}=\frac{4}{5}R_{2323}=\frac{1}{2}R_{2424}=R_{2434}=-4R_{3434}=1;\\[3pt]
\frac{3}{5}\rho_{11}=\rho_{12}=\frac{3}{8}\rho_{22}=\rho_{23}=-\frac{3}{5}\rho_{33}=-\frac{1}{2}\rho_{44}=\frac{3}{2}, \\[3pt]
\frac{1}{3}(\rho_1^*)_{12}=\frac{1}{2}(\rho_1^*)_{13}=-(\rho_1^*)_{14}=(\rho_1^*)_{23}
=-\frac{1}{4}(\rho_1^*)_{24}=(\rho_1^*)_{34}=\frac{1}{4}, \\[3pt]
(\rho_2^*)_{12}=-\frac{1}{2}(\rho_2^*)_{13}=-\frac{1}{2}(\rho_2^*)_{14}=(\rho_2^*)_{22}=-(\rho_2^*)_{23}\\[3pt]
\phantom{(\rho_2^*)_{12}}=-\frac{1}{4}(\rho_2^*)_{24}=-\frac{1}{2}(\rho_2^*)_{34}=(\rho_2^*)_{44}=-\frac{1}{2}, \\[3pt]
(\rho_3^*)_{11}=4(\rho_3^*)_{12}=2(\rho_3^*)_{13}=-\frac{4}{3}(\rho_3^*)_{14}=\frac{4}{5}(\rho_3^*)_{23}\\[3pt]
\phantom{(\rho_3^*)_{11}}=-(\rho_3^*)_{24}=-4(\rho_3^*)_{34}=-\frac{1}{2}(\rho_3^*)_{44}=1, \\[3pt]
\tau=4\tau_3^*=6\tau_1^{**}=2\tau_2^{**}=3\tau_3^{**}=12, \\[3pt]
k_{12}=\frac{4}{3}k_{13}=k_{14}=\frac{3}{5}k_{23}=\frac{3}{8}k_{24}=3k_{34}=\frac{3}{4};\\[6pt]
\mathfrak{g}_{4,7}:\\[3pt]
-\frac{4}{7}R_{1212}=-R_{1213}=4R_{1224}=-2R_{1234}=\frac{4}{7}R_{1313}=2R_{1324}=4R_{1334}\\[3pt]
\phantom{-\frac{4}{3}R_{1212}}=\frac{1}{4}R_{1414}=R_{1423}=\frac{1}{2}R_{2323}=\frac{4}{5}R_{2424}=R_{2434}=-4R_{3434}=1;\\[3pt]
\frac{2}{15}\rho_{11}=\frac{1}{5}\rho_{22}=\frac{1}{2}\rho_{23}=-\frac{1}{4}\rho_{33}=-\frac{2}{11}\rho_{44}=1, \\[3pt]
\frac{1}{3}(\rho_1^*)_{12}=(\rho_1^*)_{13}=-\frac{3}{5}(\rho_1^*)_{24}=(\rho_1^*)_{34}=\frac{3}{4}, \\[3pt]
-\frac{4}{5}(\rho_2^*)_{12}=\frac{4}{13}(\rho_2^*)_{13}=\frac{4}{11}(\rho_2^*)_{24}=\frac{4}{3}(\rho_2^*)_{34}=1, \\[3pt]
\end{array}
\end{equation*}
\begin{equation*}
\begin{array}{llll}
(\rho_3^*)_{11}=-\frac{1}{2}(\rho_3^*)_{14}=-4(\rho_3^*)_{22}=(\rho_3^*)_{23}=-4(\rho_3^*)_{33}=-(\rho_3^*)_{44}=2, \\[3pt]
\frac{6}{11}\tau=3\tau_3^*=3\tau_1^{**}=2\tau_2^{**}=\tau_3^{**}=12, \\[3pt] k_{12}=k_{13}=\frac{7}{16}k_{14}=\frac{7}{8}k_{23}=\frac{7}{5}k_{24}=7k_{34}=\frac{7}{4};\\[6pt]
\mathfrak{g}_{4,8}:\\[3pt]
4R_{1212}=2R_{1234}=-4R_{1313}=2R_{1324}=-4R_{2323}=R_{2424}=-R_{3434}=1;\\[3pt]
\rho_{11}=-\rho_{22}=\rho_{33}=\frac{1}{4}\rho_{44}=-\frac{1}{2}, \\[3pt]
(\rho_1^*)_{12}=-\frac{3}{2}(\rho_1^*)_{34}=(\rho_2^*)_{13}=-\frac{3}{2}(\rho_2^*)_{24}=-\frac{3}{4}(\rho_3^*)_{14}
=\frac{3}{5}(\rho_3^*)_{23}=-\frac{3}{4}, \\[3pt]
\tau=\frac{5}{3}\tau_1^{**}=\frac{5}{3}\tau_2^{**}=-5\tau_3^{**}=\frac{5}{2}, \;\; k_{12}=k_{13}=k_{23}=-\frac{1}{4}k_{24}=-\frac{1}{4}k_{34}=-\frac{1}{4};\\[6pt]
\mathfrak{g}_{4,10}:\\[3pt]
4R_{1212}=2R_{1224}=-4R_{1313}=2R_{1334}=\frac{4}{7}R_{2323}=-R_{2424}=R_{3434}=1;\\[3pt]
\rho_{11}=-\rho_{22}=\rho_{33}=-\frac{1}{4}\rho_{44}=-\frac{1}{2}, \;\;
4(\rho_1^*)_{12}=2(\rho_1^*)_{13}=2(\rho_1^*)_{24}=(\rho_1^*)_{34}=-1, \\[3pt]
2(\rho_2^*)_{12}=4(\rho_2^*)_{13}=(\rho_2^*)_{24}=2(\rho_2^*)_{34}=-1, \;\;
(\rho_3^*)_{22}=-\frac{4}{7}(\rho_3^*)_{23}=(\rho_3^*)_{33}=-1, \\[3pt]
\tau=\frac{7}{10}\tau_1^{**}=\frac{7}{10}\tau_2^{**}=-\frac{1}{2}\tau_3^{**}=-\frac{7}{4}, \;\; k_{12}=k_{13}=-\frac{1}{7}k_{23}=\frac{1}{4}k_{24}=\frac{1}{4}k_{34}=-\frac{1}{4};\\[6pt]
\mathfrak{g}_{4,12}:\\[3pt]
R_{1212}=-R_{1313}=-R_{2323}=-1;\;\; \rho_{11}=\rho_{22}=-\rho_{33}=2, \\[3pt]
(\rho_1^*)_{12}=(\rho_2^*)_{13}=(\rho_3^*)_{23}=1, \;\;
\frac{1}{3}\tau=\tau_1^{**}=\tau_2^{**}=\tau_3^{**}=2, \;\; k_{12}=k_{13}=k_{23}=1.
\end{array}
\end{equation*}
\end{subequations}

\end{proof}


\end{document}